\theoremstyle{plain}
\newtheorem{theorem}{Theorem}[section]
\newtheorem{proposition}[theorem]{Proposition}
\newtheorem{corollary}[theorem]{Corollary}
\theoremstyle{definition}
\newtheorem{definition}[theorem]{Definition}
\theoremstyle{remark}
\newtheorem{remark}[theorem]{Remark}
\icmltitlerunning{Learning Is a Kan Extension}
\begin{document}

\twocolumn[
\icmltitle{Learning Is a Kan Extension}

% It is OKAY to include author information, even for blind
% submissions: the style file will automatically remove it for you
% unless you've provided the [accepted] option to the icml2025
% package.

% List of affiliations: The first argument should be a (short)
% identifier you will use later to specify author affiliations
% Academic affiliations should list Department, University, City, Region, Country
% Industry affiliations should list Company, City, Region, Country

% You can specify symbols, otherwise they are numbered in order.
% Ideally, you should not use this facility. Affiliations will be numbered
% in order of appearance and this is the preferred way.
\icmlsetsymbol{equal}{*}

\begin{icmlauthorlist}
\icmlauthor{Matthew Pugh}{}
\icmlauthor{Jo Grundy}{}
\icmlauthor{Corina Cirstea}{}
\icmlauthor{Nick Harris}{}
\end{icmlauthorlist}

%\icmlaffiliation{yyy}{Department of XXX, University of YYY, Location, Country}
%\icmlaffiliation{comp}{Company Name, Location, Country}
%\icmlaffiliation{sch}{School of ZZZ, Institute of WWW, Location, Country}

\icmlcorrespondingauthor{Matthew Pugh}{mp8g16@soton.ac.uk}

% You may provide any keywords that you
% find helpful for describing your paper; these are used to populate
% the "keywords" metadata in the PDF but will not be shown in the document
\icmlkeywords{Machine Learning, Kan extension, optimisation theory, category theory, error minimisation, adjunction}

\vskip 0.3in
]

% this must go after the closing bracket ] following \twocolumn[ ...

% This command actually creates the footnote in the first column
% listing the affiliations and the copyright notice.
% The command takes one argument, which is text to display at the start of the footnote.
% The \icmlEqualContribution command is standard text for equal contribution.
% Remove it (just {}) if you do not need this facility.

%\printAffiliationsAndNotice{}  % leave blank if no need to mention equal contribution
%\printAffiliationsAndNotice{\icmlEqualContribution} % otherwise use the standard text.

\begin{abstract}
Previous work has demonstrated that efficient algorithms exist for computing Kan extensions and that some Kan extensions have interesting similarities to various machine learning algorithms. This paper closes the gap by proving that all error minimisation algorithms may be presented as a Kan extension. This result provides a foundation for future work to investigate the optimisation of machine learning algorithms through their presentation as Kan extensions. A corollary of this representation of error-minimising algorithms is a presentation of error from the perspective of lossy and lossless transformations of data.
\end{abstract}

\section{Introduction}
%recent work shows kan extensions might relate to machine learning (cite me and sheibler)

Recent work has indicated that Kan extensions have a structural similarity to many machine learning algorithms \cite{shieblerKanExtensionsData2022, pugh_using_2023}. There is a tremendous amount of theory around the study of Kan extensions \cite{perrone_kan_2022, Kelly2005} and even algorithms for computing left Kan extensions efficiently \cite{meyers_fast_2022}. If the connection between Kan extensions and machine learning algorithms can be made more concrete, then it would be possible to leverage this body of work in the study of machine learning algorithms. 

This paper seeks to provide a concrete connection by proving that all error minimisation problems may be presented as a left Kan extension (Thm \ref{theorem:universal_ml_representation}).

A definition of error minimisation using sets and functions is lifted into the category-theoretic domain by representing it with categories and functors (Def \ref{definition:category_error}). It is shown that error may be represented by a lax 2-functor, which associates a form of information loss to transformations between datasets (morphisms in a category) (Def \ref{def:s_flavoured_error}).

The category-theoretic presentation of an error minimisation problem is used to show that left adjoint functors produce a global error minimiser for any input dataset (Thm \ref{thrm:adjoints_are_minimisers}). Furthermore, the error minimiser is independent of the error, indicating that an appropriate choice of the category of datasets is sufficient to determine the global error minimisation solutions (Cor \ref{corollary:error_independent}). A consequence of this result is the connection between adjoint functor theorems \cite{porst_history_2024} and error minimisation problems, providing sufficient conditions to define when an optimal solution to an error minimisation problem must exist (Cor \ref{corollary:sufficient_conditions}).

It is then shown that left Kan extensions are also error minimisers and that for any traditional or set-theoretic error minimisation problem, there is a 2-category whose left Kan extensions are precisely the global minimisers of the error minimisation problem (Thm \ref{theorem:universal_ml_representation}).

%this paper shows:
%%starts at a set theoretic definition of error minimisation
%%lifts the definition of error minimisation into category theory
%%defines monoid flavoured preorder which show how error relates to information loss
%% proves that left adjoints are error minimisors
%% shows that left adjoints are indepedent of error so are more general algorithms
%% sufficient conditions that come from adjoint functor theorem prove 
%% shows that any extension problems can be presented as an extension diagrams in a T category
%% proves that left kan extensions are error minimisors
%$ proves that any set theoretic error minimisation problem can be converted to an extension problem in a 2-category, where global minima are kan extensions and visa versa.

\section{Background}
\subsection{Categories, Adjunctions, and Kan Extensions}

The definitions of categories, functors, natural transforms, adjunctions, and Kan extensions are found in all of the following resources. \citep{riehlCategoryTheoryContext2016, fongSevenSketchesCompositionality2018, leinsterBasicCategoryTheory2016}. The definition of a 2-category is adapted from its definition as an enriched category \cite{Kelly2005}.

A category is a collection of objects and morphisms where every morphism has a domain object and codomain object. Two morphisms may be composed if the domain of one equals the codomain of the other.
\begin{definition}[Category]
A category $C$ consists of a class of objects $Ob(C)$, and between any two objects $x,y \in Ob(C)$ a class of morphisms $C(x,y)$ such that:
\begin{itemize}[noitemsep,topsep=-8pt]
    \item Any pair $f \in C(x,y)$ and $g \in C(y, z)$ can be composed to form $gf \in C(x, z)$.
    \item Composition is associative: $(hg)f=h(gf)$.
    \item Every object $x \in Ob(C)$ has an identity morphism $Id_x \in C(x, x)$.
    \item for any $f \in \mathcal{C}(x, y)$ then $fId_x = f = Id_yf$.
\end{itemize}
\end{definition}

When clear from context, it is common to write $x \in Ob(C)$ as $x \in C$ and $f \in C(x,y)$ as $f:x\rightarrow y$. One example of a category is $Set$ whose objects are sets and whose morphisms are set functions. Morphisms are often considered to be structure preserving maps. As sets have no structure by design, their morphisms are just functions. An example of a morphism between categories is a functor.

\begin{definition}[Functor]
A functor $F:C\rightarrow D$, between categories $C$ and $D$ sends every object $x\in Ob(C)$ to $F(x) \in Ob(D)$, and every morphism $f\in C(x, y)$ to $F(f)\in D(F(x), F(y))$ such that:
\begin{itemize}[noitemsep,topsep=-8pt]
    \item $F$ preserves composition: $F(gf) = F(g)F(f)$
    \item $F$ preserves identities: $F(Id_x) = Id_{F(x)}$
\end{itemize}
\end{definition}

The product of two categories $C$ and $D$ may be written as $C\times D$. Its objects are pairs of objects from $C$ and $D$, and its morphisms are pairs of morphisms.
\begin{equation}
f\in C(x,y) \wedge g \in D(w,z) \implies (f,g) \in C\times D((x,w),(y,z))
\end{equation}
The unit of the categorical product is the category $\mathbf{1}$ which has a single object and a single morphism (which is the identity of its object). The categorical product of a category $C$ with $\mathbf{1}$ is isomorphic to $C$, meaning that there exists an invertible functor from the product into $C$. These invertible functors are referred to as the left and right unitors $l$ and $r$.
\begin{gather}
l : \mathbf{1} \times C \rightarrow C\\
r : C\times \mathbf{1} \rightarrow C
\end{gather}

The left and right unitors simply drop the single object from pairs of object in $C\times\mathbf{1}$ or $\mathbf{1}\times C$. I.e. $l(*,x) = x$ and $r(x,*) = x$. The categorical product is also associative, as described by the existence of an invertible morphism $\alpha$ for triple of objects composed using the categorical product.
\begin{equation}
\alpha : (C\times D)\times E \rightarrow C \times (D \times E)
\end{equation}
$\alpha$ simply rewrites nested tuples, $\alpha((x,y),z) = (x,(y,z))$.

As well as morphisms between categories it is also possible to consider the existence of morphisms between functors, called natural transforms.

\begin{definition}[Natural Transform]
Given functors $F,G: C\rightarrow D$ between categories $C$ and $D$, a natural transformation $\eta:F \Rightarrow G$ is a family of morphisms $\eta_x : F(x) \rightarrow G(x)$ in $D$ for each object $x \in Ob(C)$, such that $G(f)\eta_x = \eta_{y}F(f)$ for any $f\in D(x,y)$, i.e. the following diagram commutes:
% https://q.uiver.app/#q=WzAsNCxbMCwwLCJGKHgpIl0sWzEsMCwiRyh4KSJdLFswLDEsIkYoeSkiXSxbMSwxLCJHKHkpIl0sWzEsMywiRyhmKSJdLFsyLDMsIlxcZXRhX3kiLDJdLFswLDIsIkYoZikiLDJdLFswLDEsIlxcZXRhX3giXV0=
\[\begin{tikzcd}
	{F(x)} & {G(x)} \\
	{F(y)} & {G(y)}
	\arrow["{\eta_x}", from=1-1, to=1-2]
	\arrow["{F(f)}"', from=1-1, to=2-1]
	\arrow["{G(f)}", from=1-2, to=2-2]
	\arrow["{\eta_y}"', from=2-1, to=2-2]
\end{tikzcd}\]
\end{definition}

A natural transform is a morphism between morphisms, referred to as a 2-morphism, whereas a morphism between objects is a 1-morphism. When the definition of a category is extended to include 2-morphisms it is referred to as a 2-category. An example of a 2-category is $Cat$, whose objects are categories, 1-morphisms are functors, and 2-morphisms are natural transforms. Given 1-morphisms $f: x\rightarrow y$ and $g :x \rightarrow y$ a two morphism $\eta$ from $f$ to $g$ may be written as $\eta : f \Rightarrow g$. Rather than hom classes a 2-category has hom-categories. It is more concise to present the definition of a 2-category using a composition functor and to present the identity morphisms with a functor $J_x : \mathbf{1} \rightarrow C(x,x)$. The functor $J_x$ selects on object of $C(x,x)$, were $J_x(*)=Id_x$. This also introduces an identity 2-morphism $Id_f$ for any 1-morphism $f: x \rightarrow y$.

\begin{definition}[2-category]
A 2-category $C$ consists of a class of objects $Ob(C)$, and between any two objects $x,y \in Ob(C)$ a 1-category of morphisms $C(x,y)$ such that:
\begin{itemize}[noitemsep,topsep=-8pt]
    \item For any triple of objects $x,y,z \in Ob(C)$ there is a composition functor $\circ_{x,y,z} : C(y,z) \times C(x,y) \rightarrow C(x,z)$.
    \item Composition is associative: $\circ_{x,y,w}(\circ_{y,z,w}\times Id_{C(x,y)}) = \circ_{x,z,w}(Id_{C(z,w)}\times \circ_{x,y,z})\alpha$.
    \item Every object $x \in Ob(C)$ has an identity morphism $J_x : \mathbf{1} \rightarrow C(x, x)$.
    \item $\circ_{x,y,y}(J_x \times C(x,y)) = l$ and $\circ_{x,y,y}(C(y,x) \times J_y) = r$
\end{itemize}
\end{definition}

The reason for writing the definition of a 2-category using functors rather than listing the axioms of its composition of 1-morphisms and 2-morphisms is because there is a long list of axioms which are just a consequence of its composition being functorial. For example, the horizontal and vertical composition of 2-morphisms. Because 2-morphisms are 1-morphisms of their hom categories, two 2-morphisms $\eta : f\Rightarrow g$ and $\gamma : g \Rightarrow h$ may be vertically composed to form $\gamma\eta : f \Rightarrow h$. Whereas, if the 2-morphisms are side by side they may be horizontally composed via the composition functor

% https://q.uiver.app/#q=WzAsNSxbMCwwLCJ4Il0sWzEsMCwieSJdLFsyLDAsInoiXSxbMywwLCJ4Il0sWzUsMCwieiJdLFswLDEsIiIsMCx7ImxhYmVsX3Bvc2l0aW9uIjo2MCwiY3VydmUiOi0yfV0sWzAsMSwiIiwyLHsiY3VydmUiOjJ9XSxbMSwyLCIiLDIseyJjdXJ2ZSI6LTJ9XSxbMSwyLCIiLDIseyJjdXJ2ZSI6Mn1dLFszLDQsIiIsMCx7ImN1cnZlIjotMn1dLFszLDQsIiIsMix7ImN1cnZlIjoyfV0sWzUsNiwiXFxldGEiLDAseyJzaG9ydGVuIjp7InNvdXJjZSI6MjAsInRhcmdldCI6MjB9fV0sWzcsOCwiXFxnYW1tYSIsMCx7InNob3J0ZW4iOnsic291cmNlIjoyMCwidGFyZ2V0IjoyMH19XSxbOSwxMCwiXFxnYW1tYSBcXGNpcmNcXGV0YSIsMCx7InNob3J0ZW4iOnsic291cmNlIjoyMCwidGFyZ2V0IjoyMH19XV0=
\[\begin{tikzcd}
	x & y & z & x && z
	\arrow[""{name=0, anchor=center, inner sep=0}, curve={height=-12pt}, from=1-1, to=1-2]
	\arrow[""{name=1, anchor=center, inner sep=0}, curve={height=12pt}, from=1-1, to=1-2]
	\arrow[""{name=2, anchor=center, inner sep=0}, curve={height=-12pt}, from=1-2, to=1-3]
	\arrow[""{name=3, anchor=center, inner sep=0}, curve={height=12pt}, from=1-2, to=1-3]
	\arrow[""{name=4, anchor=center, inner sep=0}, curve={height=-12pt}, from=1-4, to=1-6]
	\arrow[""{name=5, anchor=center, inner sep=0}, curve={height=12pt}, from=1-4, to=1-6]
	\arrow["\eta", shorten <=3pt, shorten >=3pt, Rightarrow, from=0, to=1]
	\arrow["\gamma", shorten <=3pt, shorten >=3pt, Rightarrow, from=2, to=3]
	\arrow["{\gamma \circ\eta}", shorten <=3pt, shorten >=3pt, Rightarrow, from=4, to=5]
\end{tikzcd}\]

A 1-morphism may be composed with a 2-morphism through the process of left or right whiskering. This is simply the horizontal composition of the 2-morphism with the identity of the 1-morphism.
%
% https://q.uiver.app/#q=WzAsMTQsWzMsMCwieCJdLFs0LDAsInoiXSxbMSwwLCJ5Il0sWzAsMCwieCJdLFsyLDAsInoiXSxbMCwxLCJ4Il0sWzEsMSwieSJdLFsyLDEsInoiXSxbMywxLCJ4Il0sWzQsMSwieiJdLFs1LDAsIngiXSxbNiwwLCJ6Il0sWzUsMSwieCJdLFs2LDEsInoiXSxbMCwxLCIiLDAseyJjdXJ2ZSI6LTN9XSxbMCwxLCIiLDIseyJjdXJ2ZSI6M31dLFszLDIsImYiXSxbMiw0LCIiLDIseyJjdXJ2ZSI6Mn1dLFsyLDQsIiIsMCx7ImN1cnZlIjotMn1dLFs1LDYsIiIsMix7ImN1cnZlIjoyfV0sWzUsNiwiIiwwLHsiY3VydmUiOi0yfV0sWzYsNywiZyJdLFs4LDksIiIsMix7ImN1cnZlIjozfV0sWzgsOSwiIiwwLHsiY3VydmUiOi0zfV0sWzEwLDExLCIiLDAseyJjdXJ2ZSI6M31dLFsxMCwxMSwiIiwyLHsiY3VydmUiOi0zfV0sWzEyLDEzLCIiLDIseyJjdXJ2ZSI6M31dLFsxMiwxMywiIiwwLHsiY3VydmUiOi0zfV0sWzE0LDE1LCJcXGdhbW1hIFxcY2lyYyBJZF9mIiwxLHsic2hvcnRlbiI6eyJzb3VyY2UiOjIwLCJ0YXJnZXQiOjIwfX1dLFsxOCwxNywiXFxnYW1tYSIsMCx7InNob3J0ZW4iOnsic291cmNlIjoyMCwidGFyZ2V0IjoyMH19XSxbMjAsMTksIlxcZXRhIiwwLHsic2hvcnRlbiI6eyJzb3VyY2UiOjIwLCJ0YXJnZXQiOjIwfX1dLFsyMywyMiwiSWRfZyBcXGNpcmMgXFxldGEiLDEseyJzaG9ydGVuIjp7InNvdXJjZSI6MjAsInRhcmdldCI6MjB9fV0sWzI1LDI0LCJcXGdhbW1hXFxjZG90IGYiLDEseyJzaG9ydGVuIjp7InNvdXJjZSI6MjAsInRhcmdldCI6MjB9fV0sWzI3LDI2LCJnXFxjZG90IFxcZXRhIiwxLHsic2hvcnRlbiI6eyJzb3VyY2UiOjIwLCJ0YXJnZXQiOjIwfX1dXQ==
\[\begin{tikzcd}[column sep=scriptsize,row sep=large]
	x & y & z & x & z & x & z \\
	x & y & z & x & z & x & z
	\arrow["f", from=1-1, to=1-2]
	\arrow[""{name=0, anchor=center, inner sep=0}, curve={height=12pt}, from=1-2, to=1-3]
	\arrow[""{name=1, anchor=center, inner sep=0}, curve={height=-12pt}, from=1-2, to=1-3]
	\arrow[""{name=2, anchor=center, inner sep=0}, curve={height=-18pt}, from=1-4, to=1-5]
	\arrow[""{name=3, anchor=center, inner sep=0}, curve={height=18pt}, from=1-4, to=1-5]
	\arrow[""{name=4, anchor=center, inner sep=0}, curve={height=18pt}, from=1-6, to=1-7]
	\arrow[""{name=5, anchor=center, inner sep=0}, curve={height=-18pt}, from=1-6, to=1-7]
	\arrow[""{name=6, anchor=center, inner sep=0}, curve={height=12pt}, from=2-1, to=2-2]
	\arrow[""{name=7, anchor=center, inner sep=0}, curve={height=-12pt}, from=2-1, to=2-2]
	\arrow["g", from=2-2, to=2-3]
	\arrow[""{name=8, anchor=center, inner sep=0}, curve={height=18pt}, from=2-4, to=2-5]
	\arrow[""{name=9, anchor=center, inner sep=0}, curve={height=-18pt}, from=2-4, to=2-5]
	\arrow[""{name=10, anchor=center, inner sep=0}, curve={height=18pt}, from=2-6, to=2-7]
	\arrow[""{name=11, anchor=center, inner sep=0}, curve={height=-18pt}, from=2-6, to=2-7]
	\arrow["\gamma", shorten <=3pt, shorten >=3pt, Rightarrow, from=1, to=0]
	\arrow["{\gamma \circ Id_f}"{description}, shorten <=5pt, shorten >=5pt, Rightarrow, from=2, to=3]
	\arrow["{\gamma\cdot f}"{description}, shorten <=5pt, shorten >=5pt, Rightarrow, from=5, to=4]
	\arrow["\eta", shorten <=3pt, shorten >=3pt, Rightarrow, from=7, to=6]
	\arrow["{Id_g \circ \eta}"{description}, shorten <=5pt, shorten >=5pt, Rightarrow, from=9, to=8]
	\arrow["{g\cdot \eta}"{description}, shorten <=5pt, shorten >=5pt, Rightarrow, from=11, to=10]
\end{tikzcd}\]

The results of this work concern the properties of adjunctions and left Kan extensions as error minimisers. Both of these constructions may be presented in any 2-category, but the definition of an adjunction specific to adjoint functors will be of more use. A loose intuition of an adjunction between two functors is that each adjoint functor serves as an approximate or pseudo inverse for the other.

\begin{definition}[Adjoint Functors (triangle)]
\label{def:AdjointFunctorTriangle}
Given two functors $L : C \rightarrow D$ and $R:D \rightarrow C$, $L$ is left adjoint to $R$, and $R$ is right adjoint to $L$ (written $L\dashv R$) if and only if there exists a natural transforms $\eta : Id_C \Rightarrow RL$, called the adjunction unit, and a natural transform $\epsilon : LR \Rightarrow Id_D$, called the adjunction counit, which given any $f:c\rightarrow R(d)$ in $C$ or $g:L(c)\rightarrow d$ in $D$ there exists $\tilde f : L(c) \rightarrow d$ in $D$ or $\tilde g : c \rightarrow R(d)$ in $C$ which are unique such that they satisfy the following commutative diagrams (triangle identities).
%
% https://q.uiver.app/#q=WzAsNyxbMSwwLCJjIl0sWzAsMSwiUkwoYykiXSxbMiwxLCJSKGQpIl0sWzQsMCwiZCJdLFszLDEsIkwoYykiXSxbNSwxLCJMUihkKSJdLFszLDJdLFswLDIsImYiXSxbMCwxLCJcXGV0YV9jIiwyXSxbNCwzLCJnIl0sWzQsNSwiTChcXHRpbGRlIGcpIiwyXSxbNSwzLCJcXHZhcmVwc2lsb25fZCIsMl0sWzEsMiwiUihcXHRpbGRlIGYpIiwyXV0=
\[\begin{tikzcd}[column sep=small]
	& c &&& d \\
	{RL(c)} && {R(d)} & {L(c)} && {LR(d)} \\
	&&& {}
	\arrow["{\eta_c}"', from=1-2, to=2-1]
	\arrow["f", from=1-2, to=2-3]
	\arrow["{R(\tilde f)}"', from=2-1, to=2-3]
	\arrow["g", from=2-4, to=1-5]
	\arrow["{L(\tilde g)}"', from=2-4, to=2-6]
	\arrow["{\varepsilon_d}"', from=2-6, to=1-5]
\end{tikzcd}\]
Where $\tilde f$ is the adjunct of $f$ constructed via $\tilde f := \epsilon_dL(f)$. and $\tilde g$ is the adjunction of $g$ constructed via $\tilde g := R(g)\eta_c$
\end{definition}

\begin{definition}[Left Kan Extension (local)]
\label{def:left_kan_extension:local}
Given 1-morphisms $K:C\rightarrow E$, $G:C\rightarrow D$, a left Kan extension of $K$ along $G$ is a 1-morphism $Lan_GK:D\rightarrow E$ together with a 2-morphism $\eta:K \Rightarrow (Lan_GK)G$ such that for any other such pair $(H:D\rightarrow E,\gamma:K\Rightarrow HG)$, There exists a 2-morphism $\alpha : Lan_GK \Rightarrow H$ such that $\gamma = (\alpha \cdot G)\eta$.
\[\begin{tikzcd}                                   & D \arrow[rd, "Lan_GK", dashed]    &   \\
C \arrow[rr, "K"'] \arrow[ru, "G"] & {} \arrow[u, "\eta"', Rightarrow] & E
\end{tikzcd}\]
\end{definition}

\subsection{Monoids, Preorders, and Lax 2-Functors}

The categorical description of error presented in this paper is defined using a monoidal preorder. Though this structure can be described without the use of category theory, it is presented as a kind of 2-category so that it may interact with other categorical components. Examples of the categorical definitions of otherwise describable objects are that of the Monoid and the Preorder.

\begin{definition}[Monoid]
A monoid $C$ is a category with a single object. $Ob(C) = \{*\}$.
\end{definition}

\begin{definition}[Preorder]
A preorder $C$ is a category with at most one morphism between any two objects.\[\forall x,y \in C(f,g \in C(x,y) \implies f=g)\]
\end{definition}

\begin{remark}
The standard definition of a preorder as a transitive and reflexive relation can be recovered by taking $x \leq y$ if and only if there exists a morphism $f:x\rightarrow y$.
\end{remark}

Though the definition of error presented later does not necessarily use the real numbers, it does require that whatever order structure is used to compare errors has a bottom or least quantity of error.

\begin{definition}[Bottom Element]
Given a preorder $P$, an element $\bot \in P$ is a bottom element of $P$ if for all $x \in P$, $\bot \leq x$.
\end{definition}

Understanding how a monoid and a preorder may be defined from a categorical perspective makes the interpretation of the definition of a monoidal preorder more apparent.

\begin{definition}[Monoidal Preorder]
\label{def:monoidal_preorder}
A single object 2-category with at most one 2-morphism between any pair of 1-morphisms.
\end{definition}

\cite{johnson_2-dimensional_2020}
\begin{definition}[Lax 2-Functor between 2-categories]
\label{definition:Lax2Functor}
A Lax 2-functor $F:C\rightarrow D$, sends every object $x\in Ob(C)$ to $F(x) \in Ob(D)$, it has component functors  $F_{xy} : C(x, y) \rightarrow D(F(x), F(y))$ and the following natural transforms:
\begin{gather}
\phi : \circ_{F(x),F(y),F(z)}(F_{y,z} \times F_{x,y}) \Rightarrow F_{x,z}\circ_{x,y,z}\\
\psi : J_{F(x)} \Rightarrow FJ_x
\end{gather}
Which for all $f\in C(w,x)$, $g\in C(x,y)$, and $h\in C(y,z)$ satisfy the following constraints.
\begin{itemize}[noitemsep,topsep=-8pt]
    \item $\phi_{h,gf}(Id_{F(h)} \circ \phi_{g,f}) = \phi_{hg,f}(\phi_{h,g}\circ Id_{F(f)})$
    \item $\phi_{Id_x,f}(\psi_{x} \circ Id_{F(f)}) = Id_{F(f)}$
    \item $\phi_{f,Id_w}(Id_{F(f)}\circ \psi_w)$
\end{itemize}
\end{definition}

For the purposes of this paper, the relevant consequence of the definition of a lax 2-functor is, due to the natural transforms $\phi$ there exists a 2-morphism $\phi_{g,f} : F(g)F(f) \Rightarrow F(gf)$ in $D$ for any composable morphisms $f$ and $g$ in $C$. When the codomain of the Lax 2-functor is a monoidal preorder, the existence of a 2-morphism in the codomain can be reframed as a statement about the ordering of 1-morphisms.

\begin{proposition}
Given a monoidal preorder $S$ and a lax functor $F : P \rightarrow S$ then for composable morphisms $f$ and $g$ in $P$. \[F(g)F(f) \leq F(gf)\]
\end{proposition}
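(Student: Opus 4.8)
The plan is to obtain the inequality essentially by unwinding Definition~\ref{definition:Lax2Functor} in the special case where the codomain is a monoidal preorder. First I would name the objects involved: since $f$ and $g$ are composable in $P$, write $f \in P(x,y)$ and $g \in P(y,z)$, so that $(g,f)$ is an object of the category $P(y,z)\times P(x,y)$. Because $F$ is a lax 2-functor, it comes equipped with the natural transformation
\[
\phi : \circ_{F(x),F(y),F(z)}\bigl(F_{y,z}\times F_{x,y}\bigr) \Rightarrow F_{x,z}\,\circ_{x,y,z},
\]
whose component at $(g,f)$ is a morphism of the hom-category $S(F(x),F(z))$ from $\circ_{F(x),F(y),F(z)}(F_{y,z}\times F_{x,y})(g,f) = F(g)F(f)$ to $F_{x,z}(\circ_{x,y,z}(g,f)) = F(gf)$. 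In other words, the data of $F$ already hands us a 2-morphism $\phi_{g,f} : F(g)F(f) \Rightarrow F(gf)$ in $S$; this is exactly the observation recorded in the paragraph preceding the proposition, instantiated at $C = P$ and $D = S$.

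Second, I would translate the existence of this 2-morphism into an inequality using the hypothesis on $S$. By Definition~\ref{def:monoidal_preorder}, $S$ is a single-object 2-category whose unique hom-category $S(\ast,\ast)$ has at most one 2-morphism between any pair of 1-morphisms; that is, $S(\ast,\ast)$ is a preorder in the categorical sense. Both $F(g)F(f)$ and $F(gf)$ are objects of this preorder (they are 1-morphisms of $S$), and, following the convention from the remark that $a \leq b$ precisely when there is a morphism $a \to b$, the 2-morphism $\phi_{g,f}$ witnesses $F(g)F(f) \leq F(gf)$, as required.

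There is no substantial obstacle here; the content is entirely in matching the component of $\phi$ to the pair $(g,f)$ and in noting that the direction of the structural 2-morphism of a \emph{lax} (as opposed to oplax) 2-functor is $F(g)F(f) \Rightarrow F(gf)$, which is what forces the inequality to point the way it does. The only hypothesis actually used is that $S$ is a monoidal preorder, which is what makes the relation $\leq$ on 1-morphisms well defined and makes the mere existence of $\phi_{g,f}$, rather than any coherence property of it, sufficient to conclude.
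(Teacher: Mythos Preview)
Your proposal is correct and follows exactly the approach the paper takes: the paper does not give a separate proof environment for this proposition, but the surrounding text makes clear that the argument is precisely to read off the component $\phi_{g,f} : F(g)F(f) \Rightarrow F(gf)$ from Definition~\ref{definition:Lax2Functor} and then use that in a monoidal preorder the existence of a 2-morphism is the same as the $\leq$ relation on 1-morphisms. Your write-up simply makes this explicit.
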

\section{Error}

Error minimisation attempts to achieve a particular output in one space $d \in D$ of a given mapping $Inf : M \rightarrow D$ by selecting an appropriate input $m \in M$. To compare different choices of $m$ there is a bivariate function into the non negative real numbers $Err : D \times D \rightarrow \mathbb{R}_+$ which allows some measurement of difference between the actual output $Inf(m)$ and the desired output $d$. Such a problem may be codified with sets and functions.
\begin{definition}[Set Theoretic Error Minimisation Problem]
\label{definition:set_error}
\begin{align*}
Given \quad & M,\ D \in Set\\
& Inf \in Set(M,\ D)\\
& Err \in  Set(D \times D,\ \mathbb{R}_+)\quad\\
& d = d' \implies Err(d,d') = 0\\
&d \in D\\
\textrm{minimise} \quad & Err(d,\ Inf(m))
\end{align*}
\end{definition}

To minimise $Err(d,\ Inf(m))$ means to select a global error minimiser with respect to $d$.

\begin{definition}[Global Error Minimiser]
Given an error minimisation problem (Def \ref{definition:set_error}), $x \in M$ is a global error minimiser with respect to $d \in D$ if for any $m \in M$ then $Err(d, Inf(x)) \leq Err(d, Inf(m))$.
\end{definition}

The choice to call the mapping between input and output $Inf$ is in direct reference to the notion of model inference, where a machine learning model is used to predict an output given an input. Model inference is often referred to in the context of individual inputs vs outputs. The $Inf$ function maps a particular parametrisation of a machine learning model onto the dataset it produces when allowed to produce inference over the entire set of training inputs. From this perspective, $M$ represents the set of all choices of parameters for the machine learning model and $D$ is the set of all datasets that one may try to train against.

In order to apply the category theoretic constructions of adjunctions and Kan extensions to this definition, it needs to be lifted into a description using categories and functors. This is easily done with respect to $Inf$ by the statement that $M$ and $D$ should be categories and $Inf : M \rightarrow D$ a functor. The next question is how to categorify the notion of error. Morphisms are commonly thought of as structure preserving transformations. In the context of $D$ this would suggest that the morphisms are transformations between datasets. Data transformations, functions, or programs can only lose information. They cannot add information that wasn't previously there. The better one dataset represents the information of another, the less information loss a mapping between them may experience. This would indicate that error may be associated to a category by assigning each morphism some quantity of error that represents its information loss. The values which represent error require some order structure so they can be compared and should reflect how error composes as morphisms compose. This suggests that the values of error may be represented by a monoidal preorder (Def \ref{def:monoidal_preorder}).

\begin{definition}[$S$ Flavoured Error]
\label{def:s_flavoured_error}
Given a monoidal preorder $S$, where $Id_*$ is the bottom element, then $S$ flavoured error on $D$ is a lax 2-functor $Err : D \rightarrow S$.
\end{definition}

To make use of the structure of $D$, the choice of error should respect the information that $D$ contains. Namely, the composition of morphisms. A mapping of morphisms to morphisms which respects their composition would usually be indicative of a functor. However, though it would work, an error functor would be an excessively restrictive constraint. In practice, the information loss of the composite of two processes cannot usually be represented by the composition of the information loss of each of the processes individually. Two processes may lose the same portion of information, so their composite loss is not much worse than their individual losses. In contrast, the lossy-ness of a different pair of processes may affect entirely different portions of the information content, so their composite information loss would be much larger than their individual losses. It is much easier to represent the information loss of the composite of two morphisms via an inequality, which can be done using a lax 2-functor, encoding the relationship that the error of a composition of morphisms must be greater than or equal to some composition of their errors.
\begin{equation}
Err(g)Err(f) \leq Err(gf)
\end{equation}
One example of a suitable monoidal preorder would be the single object category $\mathbb{R}_\wedge$ whose morphisms are the non-negative real numbers composed by taking the maximum and ordered by the standard ordering on the reals. Imagine the case of the objects of $D$ being data streams with morphisms being functions which map one data stream to another. In this case, the system is well described by an $\mathbb{R}_\wedge$ flavoured error on $D$. The functions between data streams have some associated information loss. If one is considering the error to be measured purely by the lost information and not just some invertible scrambling, then it wouldn't be possible to undo the error. So the error of those two functions composed together must always be greater than the maximum of the two errors.
\begin{equation}
Max(Err(g), Err(f)) \leq Err(gf)
\end{equation}

From a choice of $S$ flavoured error, it is possible to recover an ordering of error associated with pairs of objects of $D$ by looking at the best case scenario, the least errorful morphism.

\begin{definition}[Error Comparison]
\label{def:error_comparison}
Given $S$ flavoured error on $D$. For objects $x,y,z,w \in D$ then $Err(x,y) \leq_S Err(z,w)$ if and only if, for any $f : z \rightarrow w$ there exists a $g : x \rightarrow y$ and $\sigma : Err(g) \Rightarrow Err(f)$. 
\end{definition}

\begin{remark}
The value $Err(x,y)$ is a notational convenience and is not an object in $S$. Instead, the important aspect of error in the traditional case is that it induces a preorder on pairs of objects. Def \ref{def:error_comparison} is a way of inducing a preorder on pairs of objects of $D$ using $S$ flavoured error.
\end{remark}

\begin{proposition}
The error comparison of an $S$ flavoured error on $D$ defines a preorder.
\end{proposition}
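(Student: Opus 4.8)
The plan is to verify directly from Definition \ref{def:error_comparison} that $\leq_S$ satisfies the two axioms of a preorder: reflexivity and transitivity. The relation lives on the class $Ob(D)\times Ob(D)$ of pairs of objects of $D$, the pair $(z,w)$ standing in for the symbol $Err(z,w)$; since a preorder need not be antisymmetric, it is harmless that distinct pairs may turn out to be mutually related. The only structure I will use is that $Err$ assigns to each $1$-morphism of $D$ a $1$-morphism of $S$ (via the component maps $Err_{xy}$ on the discrete hom-categories of $D$) and that $S$, being a $2$-category, has identity $2$-morphisms $Id_{(-)}$ and vertical composition of $2$-morphisms.

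For reflexivity I would show $Err(x,y)\leq_S Err(x,y)$ for every pair $(x,y)$. Unwinding the definition, given any $f:x\to y$ I must exhibit some $g:x\to y$ together with a $2$-morphism $\sigma:Err(g)\Rightarrow Err(f)$. Taking $g:=f$ and $\sigma:=Id_{Err(f)}$ works; and if $D(x,y)$ happens to be empty the clause ``for any $f:x\to y$'' is vacuously satisfied. This step is immediate.

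For transitivity, suppose $Err(x,y)\leq_S Err(z,w)$ and $Err(z,w)\leq_S Err(u,v)$, and fix an arbitrary $f:u\to v$. Applying the second hypothesis to $f$ yields $g:z\to w$ with a $2$-morphism $\sigma:Err(g)\Rightarrow Err(f)$. Applying the first hypothesis to this $g$ yields $h:x\to y$ with a $2$-morphism $\tau:Err(h)\Rightarrow Err(g)$. Vertically composing gives $\sigma\tau:Err(h)\Rightarrow Err(f)$, which is well typed precisely because the intermediate pair $(z,w)$ supplied a single $1$-morphism $Err(g)$ of $S$ through which both $2$-morphisms factor. Since $f$ was arbitrary, $h$ and $\sigma\tau$ witness $Err(x,y)\leq_S Err(u,v)$.

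I do not expect a genuine obstacle here: each step is a one-line diagram chase. The only points needing a moment's care are (i) the treatment of the universal quantifier over morphisms when no such morphism exists, where the condition becomes vacuously true, and (ii) confirming that the two witnesses produced in the transitivity argument are composable in $S$, which is exactly the role played by the common intermediary $Err(g)$. Notably, neither the lax coherence data $\phi,\psi$ of the $2$-functor $Err$ nor the single-object and thinness conditions defining a monoidal preorder are required for this proposition.
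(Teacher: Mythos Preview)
Your proposal is correct and follows essentially the same route as the paper: reflexivity via the identity $2$-morphism $Id_{Err(f)}$, and transitivity by chaining the two witnessing $2$-morphisms through the intermediate $Err(g)$ and vertically composing. Your version is slightly more explicit about the vacuous case and about which structure is actually used, but the argument is the same.
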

\begin{proof}
For any morphism $f : x \rightarrow y$ there is an identity 2-morphism $Id_{Err(f)}: Err(f) \Rightarrow Err(f)$ which implies that $Err(x,y) \leq_S Err(x,y)$, demonstrating that the relation is reflexive.

If $Err(x,y) \leq Err(w,z)$ and $Err(w,z) \leq Err(a,b)$ then for any morphism $f:a \rightarrow b$ there is a morphism $g: w \rightarrow z$ and 2-morphism $\sigma : Err(g) \Rightarrow Err(f)$. Given the existence of $g$, there must be a morphism $h:x \rightarrow y$ with associated 2-morphism $\varphi : Err(h) \Rightarrow Err(g)$, which by composition induces $\sigma\varphi : Err(h) \Rightarrow Err(f)$ for any $f$ implying that $Err(x,y) \leq_S Err(a,b)$, demonstrating that the relation is transitive. As it is both reflexive and transitive the error comparison relation is a preorder.
\end{proof}
By combining the requirement that $Inf : M \rightarrow D$ is a functor, with a choice of $S$ flavoured error on $D$, one may produce a category-theoretic definition of an error minimisation problem.
\begin{definition}[Category Theoretic Error Minimisation Problem]
\label{definition:category_error}
\begin{align*}
Given \quad & M,\ D \in Cat\\
& Inf \in Cat(M,\ D)\\
& \textit{$S$ flavoured error on $D$}\quad\\
&d \in D\\
\textrm{return} \quad & \textit{A global error minimiser with respect to $d$}
\end{align*}
\end{definition}

\begin{remark}
By fixing $d$, Def \ref{definition:category_error} may also serve as a definition for category theoretic loss minimisation.
\end{remark}

The value of translating the set-theoretic definition into the category theoretic definition is that the existence of morphisms between models and datasets provides more information about the structure of the problem. The first observation to make about the additional information is that it constrains the choices of error to those which respect the structure of the category. From a practical perspective, this provides a novel approach to the selection of an error function given a particular error minimisation problem, if one knows the morphisms between datasets. The second observation is that because the choice of error is now dependent on the morphisms of $D$, the properties of category theoretic constructions which reference only morphisms may be translated into their consequences with respect to error. The first such consequence is that adjunctions are error minimisers.

%why should error be an s flavoured preorder
\begin{theorem}[Adjunctions are Error Minimisers]
\label{thrm:adjoints_are_minimisers}
Given a category-theoretic error minimisation problem (Def \ref{definition:category_error}) where $ Inf : M \rightarrow D$ has a left adjoint $ Alg : D \rightarrow M$, then for all $d \in D$, $Alg(d)$ is a global error minimiser with respect to $d$.
\end{theorem}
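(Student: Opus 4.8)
The plan is to unwind the category-theoretic notion of global error minimiser (Def~\ref{definition:category_error}, interpreted through the error comparison of Def~\ref{def:error_comparison}) and reduce the claim to producing, for each competing model, a low-error morphism out of $d$. Fixing $d \in D$ and an arbitrary $m \in M$, one must show $Err(d, Inf(Alg(d))) \leq_S Err(d, Inf(m))$, which by Def~\ref{def:error_comparison} means: for every $f : d \to Inf(m)$ in $D$, exhibit a morphism $g : d \to Inf(Alg(d))$ together with a 2-morphism $\sigma : Err(g) \Rightarrow Err(f)$. If $D$ has no morphism $d \to Inf(m)$ the inequality holds vacuously, so I would assume such an $f$ is given.

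The candidate for $g$ is the adjunction unit component $\eta_d : d \to Inf(Alg(d))$, where $\eta : Id_D \Rightarrow Inf\,Alg$ is the unit of $Alg \dashv Inf$. First I would invoke the triangle identity of Def~\ref{def:AdjointFunctorTriangle}: since $f : d \to Inf(m)$ has the shape $c \to R(d')$ for this adjunction, it factors as $f = Inf(\tilde f)\,\eta_d$ with adjunct $\tilde f : Alg(d) \to m$. Next I would apply lax 2-functoriality of $Err$ — in the form of the proposition that $Err(g')Err(f') \leq Err(g'f')$ for composable $f',g'$ — to the composite $Inf(\tilde f)\,\eta_d$, obtaining $Err(Inf(\tilde f))\,Err(\eta_d) \leq Err(f)$ in the monoidal preorder $S$.

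The final step, which carries the real content, is to discard the spurious factor $Err(Inf(\tilde f))$. Because $Id_*$ is the bottom element of $S$ (part of Def~\ref{def:s_flavoured_error}) we have $Id_* \leq Err(Inf(\tilde f))$, and since composition of 1-morphisms in $S$ is functorial hence monotone, composing with $Err(\eta_d)$ and using the unit law gives $Err(\eta_d) = Id_*\,Err(\eta_d) \leq Err(Inf(\tilde f))\,Err(\eta_d) \leq Err(f)$. An inequality in the preorder on the hom-category of $S$ is exactly the existence of a 2-morphism, so this yields the required $\sigma : Err(\eta_d) \Rightarrow Err(f)$ with $g = \eta_d$. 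As $m$ was arbitrary, $Alg(d)$ is a global error minimiser with respect to $d$.

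I expect the main obstacle to be the bookkeeping in that last step: one must notice that the lax 2-functor inequality runs the "wrong" way — it lower-bounds $Err(f)$ by a composite involving $Err(\eta_d)$ rather than by $Err(\eta_d)$ itself — and that the gap closes only by using both the bottom-element hypothesis on $S$ and the monotonicity of its monoidal product. Secondary points to get right are keeping the handedness of the adjunction straight (since $Alg$ is the \emph{left} adjoint, it is the unit $\eta$, not the counit, that supplies the morphism out of $d$) and noting that the missing-morphism case is handled trivially by Def~\ref{def:error_comparison}.
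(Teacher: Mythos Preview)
Your proposal is correct and follows essentially the same argument as the paper: use the unit $\eta_d$ as the witnessing morphism, factor any $f:d\to Inf(m)$ as $Inf(\tilde f)\,\eta_d$ via the triangle identity, apply lax 2-functoriality of $Err$, and then discard the $Err(Inf(\tilde f))$ factor using the bottom-element hypothesis on $S$. The only cosmetic difference is that you phrase the last step as monotonicity of the monoidal product, whereas the paper spells it out via an explicit right whiskering of the 2-morphism $Id_*\Rightarrow Err(Inf(\tilde f))$ by $Err(\eta_d)$ followed by vertical composition.
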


\begin{proof}
For any $d\in D$ show that $Alg(d)$ is a global error minimiser with respect to $d$ by demonstrating that for any $m\in M$, $Err(d, InfAlg(d)) \leq_S Err(d, Inf(m))$.

If there does not exist a morphism $f: d \rightarrow Inf(m)$ then the error comparison requirement (Def \ref{def:error_comparison}) is trivially satisfied.

If there does exists a morphism $f: d \rightarrow Inf(m)$ then by the definition of an adjunction (Def \ref{def:AdjointFunctorTriangle}) for any morphism $f : d \rightarrow Inf(m)$ there is a unique morphism $\tilde{f} : Alg(d) \rightarrow m$ such that $Inf(\tilde f)\eta_d = f$, where $\eta : Id_D \Rightarrow InfAlg$ is the adjunction unit.

By the definition of a lax 2-functor (Def \ref{definition:Lax2Functor}) there exists a 2-morphism. \[\sigma: Err(Inf(\tilde f))Err(\eta_d) \Rightarrow Err(Inf(\tilde f)\eta_d) = Err(f)\]

Because the identity of $S$ is the bottom element there is a 2-morphism $\varphi : Id_* \Rightarrow Err(Inf(\tilde f))$ which by right whiskering produces the 2-morphism \[\varphi \cdot Err(\eta_d) : Err(\eta_d)\Rightarrow Err(Inf(\tilde f))Err(\eta_d)\]
Compose this with $\sigma$.
\[\sigma(\varphi \cdot Err(\eta_d)) : Err(\eta_d) \Rightarrow Err(f) \] 
As this is true for any choice of $f$ this proves the error comparison. \[Err(d, InfAlg(d)) \leq_S Err(d, Inf(m))\]
As the error comparison is true for any choice of $m$ them $Alg(d)$ is a global error minimiser with respect to $d$.
\end{proof}

\begin{corollary}
\label{corollary:error_independent}
The left adjoint of $Inf$ is an error minimiser for any choice of error on $D$. If one can compute the left adjoint of $Inf$ than they may identify the global error minimiser, with respect to any $d$, without ever making a particular choice of error.
\end{corollary}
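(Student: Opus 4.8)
The plan is to observe that the proof of Theorem~\ref{thrm:adjoints_are_minimisers} never invoked any property of the chosen error beyond the axioms that \emph{every} $S$ flavoured error satisfies, so its conclusion holds uniformly. Concretely, I would re-read that proof and isolate exactly which hypotheses it consumes: (i) the universal property of the adjunction $Alg \dashv Inf$, which for any $f : d \to Inf(m)$ supplies the factorisation $f = Inf(\tilde f)\eta_d$; (ii) the lax 2-functoriality of $Err$, which supplies the comparison 2-morphism $\sigma : Err(Inf(\tilde f))Err(\eta_d) \Rightarrow Err(f)$; and (iii) the assumption that $Id_*$ is the bottom element of $S$, which supplies $\varphi : Id_* \Rightarrow Err(Inf(\tilde f))$ and hence, after whiskering and composing with $\sigma$, a 2-morphism $Err(\eta_d) \Rightarrow Err(f)$. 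Only (i) mentions $Inf$; neither (ii) nor (iii) mentions a \emph{particular} error, since (ii) is part of Definition~\ref{definition:Lax2Functor} and (iii) is part of Definition~\ref{def:s_flavoured_error}. As $\eta_d : d \to InfAlg(d)$ is precisely the witness required by Definition~\ref{def:error_comparison}, the inequality $Err(d, InfAlg(d)) \leq_S Err(d, Inf(m))$ follows for every monoidal preorder $S$ with bottom $Id_*$ and every lax 2-functor $Err : D \to S$.

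The first sentence of the corollary is then immediate: for a fixed $Inf$ with left adjoint $Alg$, the object $Alg(d)$ is a global error minimiser with respect to $d$ regardless of which $S$ flavoured error one later equips $D$ with. For the second sentence I would note that the assignment $d \mapsto Alg(d)$ is already a complete description of a minimiser for every target $d \in D$, and that producing it requires only $Inf$ together with a verification that it admits a left adjoint — nothing in this data refers to $S$ or $Err$. Hence one may exhibit global minimisers first and only afterwards, or never, commit to an error functional.

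I expect no substantive obstacle here; this is a ``the proof proves more than the theorem stated'' corollary. The one point needing care is to make explicit that ``any choice of error on $D$'' ranges over \emph{both} the monoidal preorder $S$ and the lax 2-functor $Err : D \to S$, and to confirm that the proof of Theorem~\ref{thrm:adjoints_are_minimisers} is genuinely uniform in $S$, which it is: it touches $S$ only through the generic 2-categorical operations (whiskering and vertical composition) and the single order-theoretic fact that $Id_*$ is bottom. A secondary subtlety worth stating is that Definition~\ref{def:error_comparison} is itself phrased relative to a fixed $S$ flavoured error, so the claim is not that one inequality holds across all preorders at once, but rather that in each induced preorder $\leq_S$ the element $Alg(d)$ is minimal; spelling out that quantifier order is the whole content of the write-up.
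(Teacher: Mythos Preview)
Your proposal is correct and matches the paper's treatment: the paper offers no separate proof for this corollary, treating it as immediate from Theorem~\ref{thrm:adjoints_are_minimisers}, and your write-up simply makes explicit the observation that the theorem's proof is uniform in the choice of $S$ and $Err$. Your care about the quantifier order (one minimality claim per $\leq_S$) and the range of ``any choice of error'' is more than the paper itself spells out, but entirely in the same spirit.
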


\begin{remark}
If a true error minimising algorithm returns a global error minimiser for any element $d\in D$, then one could define algorithms as left adjoint to inference.
\end{remark}

If an inverse to inference exists then it would make sense it would represent an error minimisation algorithm. Ideally, any algorithm would return a model which produced exactly the intended dataset. In the case that one is not capable of reproducing exactly the desired dataset, then a pseudo inverse to inference would be an intuitive choice. In the context of category theory, the natural choice of pseudo inverse is an adjunction, but this does not have to be an adjunction of functors. The definition of an adjunction can be generalised to any 2-category. If one believes that algorithms are left adjoint to inference, then whatever object one uses to represent the collection of models and datasets, if these objects exist in a suitable 2-category, then one can define what a global error minimising algorithm should be.

% https://q.uiver.app/#q=WzAsMixbMCwwLCJNIl0sWzIsMCwiRCJdLFswLDEsIkluZiIsMCx7ImN1cnZlIjotMX1dLFsxLDAsIkFsZyIsMCx7ImN1cnZlIjotMX1dLFsyLDMsIlxcdG9wIiwxLHsic2hvcnRlbiI6eyJzb3VyY2UiOjIwLCJ0YXJnZXQiOjIwfSwic3R5bGUiOnsiYm9keSI6eyJuYW1lIjoibm9uZSJ9fX1dXQ==
\[\begin{tikzcd}
	M && D
	\arrow[""{name=0, anchor=center, inner sep=0}, "Inf", curve={height=-6pt}, from=1-1, to=1-3]
	\arrow[""{name=1, anchor=center, inner sep=0}, "Alg", curve={height=-6pt}, from=1-3, to=1-1]
	\arrow["\top"{description}, shorten >=2pt, Rightarrow, no body, from=0, to=1]
\end{tikzcd}\]

\begin{corollary}
\label{corollary:sufficient_conditions}
If an error minimising problem is presented in the categorical form, then Theorem \ref{thrm:adjoints_are_minimisers} shows that one may prove that a global minimiser exists by proving that a left adjoint to $Inf$ exists. This statement may be repackaged with the various adjoint functor theorems \cite{porst_history_2024} to produce sufficient conditions for the existence of optimal solutions to error minimisation problems.
\end{corollary}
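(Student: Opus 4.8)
The plan is to treat this corollary as an immediate consequence of Theorem~\ref{thrm:adjoints_are_minimisers} together with the classical adjoint functor theorems, so the ``proof'' is an assembly of known results rather than a new argument. First I would observe that Theorem~\ref{thrm:adjoints_are_minimisers} already does the substantive work: if $Inf : M \rightarrow D$ admits a left adjoint $Alg : D \rightarrow M$, then $Alg(d)$ is a global error minimiser with respect to $d$ for \emph{every} $d \in D$, and by Corollary~\ref{corollary:error_independent} this holds uniformly in the choice of $S$ flavoured error. Hence the assertion ``a global minimiser exists for every $d$'' is implied by ``$Inf$ has a left adjoint,'' which is precisely the reduction the corollary claims.

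Next I would discharge that reduction using an adjoint functor theorem, taking $Inf$ in the role of the right adjoint. The General Adjoint Functor Theorem then gives: if $M$ is locally small and complete, $Inf$ preserves all small limits, and $Inf$ satisfies the solution-set condition at each $d \in D$ --- a small family of morphisms $d \rightarrow Inf(m_i)$ through which every morphism $d \rightarrow Inf(m)$ factors --- then $Inf$ has a left adjoint $Alg$. Composing with the previous step yields the explicit statement: under these hypotheses, an optimal solution to the error minimisation problem of Definition~\ref{definition:category_error} exists for every $d \in D$, independent of the error. One may instead invoke the Special Adjoint Functor Theorem (replacing the solution-set condition by well-poweredness and a small coseparating family) or any variant surveyed in \cite{porst_history_2024}, each producing a sufficient condition of the same form. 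I would also record the caveat that these conditions are sufficient but not necessary: a pointwise choice of minimiser need not be functorial, and a functorial choice need not be adjoint to $Inf$, so the adjoint functor theorems isolate a tractable sub-class of solvable problems rather than characterising them.

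The only real obstacle is bookkeeping around size: one must fix the foundational conventions --- local smallness of $M$ and $D$, and smallness of the solution set or coseparating family --- so that the chosen adjoint functor theorem genuinely applies, and one must read ``optimal solution'' as in Definition~\ref{definition:category_error}, namely a global error minimiser with respect to the given $d$, rather than the minimiser of some external numerical loss. Beyond that the corollary requires no computation; it is a transport of standard existence theorems across the dictionary established by Theorem~\ref{thrm:adjoints_are_minimisers}.
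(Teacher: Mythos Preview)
Your proposal is correct and matches the paper's treatment: the paper gives no explicit proof for this corollary, treating it as an immediate repackaging of Theorem~\ref{thrm:adjoints_are_minimisers} together with the cited adjoint functor theorems, which is exactly the assembly you describe. Your additional spelling-out of the GAFT/SAFT hypotheses and the sufficiency-not-necessity caveat goes beyond what the paper records but is entirely in the intended spirit.
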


While the production of sufficient conditions for the existence of global error minimisers is an incredibly powerful result, it is excessively strict to always require the existence of an adjunction to discuss global error minimisers in a category-theoretic context. Furthermore, there are many practical problems where a global error minimiser may exist for some datasets but not for others. It would be more helpful to provide a construction which exists as long as a particular dataset has a global error minimiser. Such a construction would be the left Kan extension. The application of a left Kan extension to a category theoretic error minimisation problem requires the problem to be represented as an extension triangle. This can be done with a suitable choice of 2-category.

\begin{proposition}[Extension Error Minimisation]
\label{prop:extension_error_minimisation}
An extension triangle (see below) in a 2-category $\mathbb{T}$ with an $S$ flavoured error on $\mathbb{T}(\delta, \tau)$ defines a category theoretic error minimisation problem (Def \ref{definition:category_error})
% https://q.uiver.app/#q=WzAsMyxbMCwxLCJcXGRlbHRhIl0sWzIsMSwiXFx0YXUiXSxbMSwwLCJcXG11Il0sWzAsMiwiXFxpb3RhIl0sWzAsMSwiZCIsMl0sWzIsMSwiIiwwLHsic3R5bGUiOnsiYm9keSI6eyJuYW1lIjoiZGFzaGVkIn19fV1d
\[\begin{tikzcd}
	& \mu \\
	\delta && \tau
	\arrow[dashed, from=1-2, to=2-3]
	\arrow["\iota", from=2-1, to=1-2]
	\arrow["d"', from=2-1, to=2-3]
\end{tikzcd}\]
\end{proposition}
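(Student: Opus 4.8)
The plan is to read the extension triangle as supplying exactly the five pieces of data demanded by Definition \ref{definition:category_error}, with the hom-categories of $\mathbb{T}$ playing the roles of the model category and the dataset category. Concretely, I would set $D := \mathbb{T}(\delta,\tau)$ and $M := \mathbb{T}(\mu,\tau)$. By the definition of a 2-category these hom-classes are honest (1-)categories, so the requirement $M, D \in Cat$ is immediate. The distinguished 1-morphism $d : \delta \rightarrow \tau$ of the triangle is by construction an object of $\mathbb{T}(\delta,\tau) = D$, so it supplies the element $d \in D$. The $S$ flavoured error required on $D$ is precisely the $S$ flavoured error on $\mathbb{T}(\delta,\tau)$ assumed in the hypothesis, so nothing further is needed there.

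The only substantive step is to produce the functor $Inf : M \rightarrow D$. I would take $Inf$ to be precomposition (right whiskering) by $\iota$: on objects $H : \mu \rightarrow \tau$ it returns $H\iota : \delta \rightarrow \tau$, and on a 2-morphism $\alpha : H \Rightarrow H'$ it returns $\alpha \cdot \iota : H\iota \Rightarrow H'\iota$. Formally this is the composite functor
\[ Inf \;=\; \circ_{\delta,\mu,\tau}\,(Id_{\mathbb{T}(\mu,\tau)}\times \bar\iota)\, r^{-1}, \]
where $\bar\iota : \mathbf{1}\rightarrow \mathbb{T}(\delta,\mu)$ is the functor selecting the object $\iota$ and $r$ is the right unitor. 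Since the 2-category axioms assert that $\circ_{\delta,\mu,\tau}$ is a functor and that the unitors are invertible functors, $Inf$ is a composite of functors and hence a functor; the whiskering laws $Id_H \cdot \iota = Id_{H\iota}$ and $(\beta\alpha)\cdot\iota = (\beta\cdot\iota)(\alpha\cdot\iota)$ are then just functoriality of $\circ_{\delta,\mu,\tau}$ in its first argument, not something to be re-derived by hand. Assembling $M$, $D$, $Inf$, the given $S$ flavoured error, and $d \in D$ then yields an instance of Definition \ref{definition:category_error}, which is the claim.

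The main obstacle is therefore purely one of bookkeeping: turning the informal phrase ``precompose with $\iota$'' into a genuine functor by exhibiting it as a restriction of the composition functor along $\bar\iota$, and checking that the ``$d$'' of the triangle and the ``$d$'' of Definition \ref{definition:category_error} genuinely coincide. I would also note, looking ahead, that with these choices the left Kan extension $Lan_\iota d \in \mathbb{T}(\mu,\tau) = M$ is exactly the natural candidate for the global error minimiser with respect to $d$: $Inf(Lan_\iota d) = (Lan_\iota d)\iota$, and the Kan extension 2-morphism $\eta : d \Rightarrow (Lan_\iota d)\iota$ is a morphism $d \rightarrow Inf(Lan_\iota d)$ in $D$ whose universal factorisation, combined with the bottom-element property of $Id_*$ and the lax 2-functoriality of $Err$ exactly as in the proof of Theorem \ref{thrm:adjoints_are_minimisers}, gives $Err(d, Inf(Lan_\iota d)) \leq_S Err(d, Inf(H))$ for every $H$ — but that identification belongs to the subsequent theorem rather than to this proposition.
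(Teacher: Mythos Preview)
Your proposal is correct and follows essentially the same approach as the paper: set $M=\mathbb{T}(\mu,\tau)$, $D=\mathbb{T}(\delta,\tau)$, and take $Inf$ to be the precomposition-by-$\iota$ functor $\mathbb{T}(\iota,\tau)$. You are simply more explicit than the paper in exhibiting $Inf$ as a restriction of the composition functor and in checking off the $d\in D$ and $S$-flavoured-error data, whereas the paper records only the construction of $Inf$ and leaves the rest implicit.
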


\begin{proof}
The 1-morphism $\iota : \delta \rightarrow \mu$ and the object $\tau$ induce the functor $\mathbb{T}(\iota, \tau) : \mathbb{T}(\mu, \tau) \rightarrow \mathbb{T}(\delta, \tau)$. The functor $\mathbb{T}(\iota, \tau)$ is defined via precomposition, sending any object $m \in \mathbb{T}(\mu, \tau)$ to $m\iota \in \mathbb{T}(\delta, \tau)$. By renaming the functor and categories as $Inf : M \rightarrow D$ it is clear that they define a category theoretic error minimisation problem. 
\end{proof}

\begin{remark}
It is also the case that any category-theoretic error minimisation problem may be presented as an extension in a 2-category $\mathbb{T}$ by directly defining the hom categories and composition functor of $\mathbb{T}$ to be the categories and inference functor of the error minimisation problem. Proving this is also true for the set-theoretic error minimisation problem is slightly trickier (Thm \ref{theorem:universal_ml_representation}).
\end{remark}

\begin{theorem}[Kan Extensions are Error Minimisers]
Given a category theoretic error minimisation problem (Def \ref{definition:category_error}) in the form of an extension (Prop \ref{prop:extension_error_minimisation}), the left Kan extension $Lan_\iota d$ is, if it exists, a global error minimiser with respect to $d$.
\end{theorem}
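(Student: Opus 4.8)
The plan is to run the argument of Theorem~\ref{thrm:adjoints_are_minimisers} almost verbatim, with the universal 2-morphism of the left Kan extension playing the role previously played by the adjunction unit. Unpacking Proposition~\ref{prop:extension_error_minimisation}, the induced problem has $M = \mathbb{T}(\mu,\tau)$, $D = \mathbb{T}(\delta,\tau)$, and $Inf = \mathbb{T}(\iota,\tau)$ acting by precomposition with $\iota$: on an object $m$ it returns $m\iota$, and on a 2-morphism $\alpha : m \Rightarrow m'$ it returns the right whiskering $\alpha\cdot\iota : m\iota \Rightarrow m'\iota$, so $Inf(\alpha) = \alpha\cdot\iota$. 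The left Kan extension $Lan_\iota d$ is an object of $M$, and its accompanying 2-morphism $\eta : d \Rightarrow (Lan_\iota d)\iota$ is precisely a morphism $\eta : d \to Inf(Lan_\iota d)$ in $D$.

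First I would fix an arbitrary $m\in M$ and aim to verify $Err(d, Inf(Lan_\iota d)) \leq_S Err(d, Inf(m))$ against Definition~\ref{def:error_comparison}. If $D$ has no morphism $d \to Inf(m)$ the condition is vacuous, so let $f : d \to Inf(m)$ be given, i.e.\ a 2-morphism $f : d \Rightarrow m\iota$ in $\mathbb{T}$. By the universal property of the left Kan extension (Def~\ref{def:left_kan_extension:local}) there is a 2-morphism $\alpha : Lan_\iota d \Rightarrow m$ with $f = (\alpha\cdot\iota)\eta$; rewriting with $Inf(\alpha) = \alpha\cdot\iota$ this is the factorisation $f = Inf(\alpha)\,\eta$ in $D$, the exact analogue of the identity $f = Inf(\tilde f)\eta_d$ used for adjunctions.

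Next I would transport this factorisation through the lax 2-functor $Err : D \to S$. Its structure 2-morphism supplies $\phi_{Inf(\alpha),\eta} : Err(Inf(\alpha))\,Err(\eta) \Rightarrow Err(Inf(\alpha)\,\eta) = Err(f)$. Since $Id_*$ is the bottom element of $S$ there is $\varphi : Id_* \Rightarrow Err(Inf(\alpha))$, and right whiskering by $Err(\eta)$ gives $\varphi\cdot Err(\eta) : Err(\eta) \Rightarrow Err(Inf(\alpha))\,Err(\eta)$. Vertically composing, $\sigma := \phi_{Inf(\alpha),\eta}\,(\varphi\cdot Err(\eta))$ is a 2-morphism $Err(\eta) \Rightarrow Err(f)$. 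Setting $g := \eta : d \to Inf(Lan_\iota d)$, the pair $(g,\sigma)$ is exactly the data Definition~\ref{def:error_comparison} demands for this $f$; as $f$ and then $m$ were arbitrary, $Lan_\iota d$ is a global error minimiser with respect to $d$.

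Since the second and third paragraphs are a near-mechanical rerun of the adjunction proof, the only step needing genuine care --- the crux --- is the identification in the first paragraph that $Inf$, being the precomposition hom-functor $\mathbb{T}(\iota,\tau)$, sends the Kan comparison 2-morphism $\alpha$ to the whiskering $\alpha\cdot\iota$ that appears in the triangle identity. This is what makes $\eta$ behave like an adjunction unit for the specific object $d$ and lets the rest go through; it also makes clear why no hypothesis beyond ``$Lan_\iota d$ exists'' is required, as the universal property alone produces $\alpha$ for every competitor $f$.
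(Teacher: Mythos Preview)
Your proof is correct and follows the paper's own argument essentially step for step: both invoke the universal property of $Lan_\iota d$ to factor $f = (\alpha\cdot\iota)\eta$, then push this through the lax structure of $Err$ and the bottom-element property of $Id_*$ to obtain the required 2-morphism $Err(\eta)\Rightarrow Err(f)$. The only cosmetic difference is that you write $Inf(\alpha)$ for what the paper calls $\alpha\cdot\iota$, but you explicitly identify these, so the arguments coincide.
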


\begin{proof}
For any $d\in D$ show that $Lan_\iota d$ is a global error minimiser by demonstrating that for any $m\in M$, $Err(d, Inf(Lan_\iota d)) \leq_S Err(d, Inf(m))$.

If there does not exist a morphism $f: d \rightarrow Inf(m)$ then the error comparison requirement (Def \ref{def:error_comparison}) is trivially satisfied.

If there does exist a morphism $f: d \rightarrow Inf(m)$ then this is also a 2-morphism, $f: d \Rightarrow m\iota$ (recalling that $Inf(m) = m\iota$ as described in Prop \ref{prop:extension_error_minimisation}) in $\mathbb{T}$. By the definition of a left Kan extension (Def \ref{def:left_kan_extension:local}), for any 2-morphism $f: d \Rightarrow m\iota$ there exists a 2-morphism $\alpha : Lan_\iota d \Rightarrow m$ such that $f = (\alpha\cdot \iota)\eta$. These 2-morphisms in $\mathbb{T}$ correspond directly with 1-morphisms of $D$ i.e. $\eta : d \rightarrow Inf(Lan_\iota d)$ and $\alpha\cdot \iota : Inf(Lan_\iota d) \rightarrow Inf(m)$ where $Inf(Lan_\iota d) = (Lan_\iota d)\iota$.

By the definition of a lax 2-functor (Def \ref{definition:Lax2Functor}) there exists a 2-morphism. \[\sigma: Err(\alpha\cdot \iota)Err(\eta) \Rightarrow Err((\alpha\cdot \iota)\eta) = Err(f)\]

Because the identity of $S$ is the bottom element there is a 2-morphism $\varphi : Id_* \Rightarrow Err(\alpha\cdot \iota)$ which by right whiskering produces the 2-morphism \[\varphi \cdot Err(\eta) : Err(\eta_d)\Rightarrow Err(\alpha\cdot \iota)Err(\eta_d)\]
Compose this with $\sigma$.
\[\sigma(\varphi \cdot Err(\eta)) : Err(\eta_d) \Rightarrow Err(f) \] 
As this is true for any choice of $f$ this proves the error comparison. \[Err(d, Inf(Lan_\iota d)) \leq_S Err(d, Inf(m))\]
As the error comparison is true for any choice of $m$ them $Lan_\iota d$ is a global error minimiser.
\end{proof}
\section{Universal representation}

It has been shown that Kan extensions represent global error minimisers for category-theoretic error minimisation problems, but it may not be clear that this also applies to the set-theoretic error minimisation problem. It is actually possible to convert any set-theoretic error minimisation problem into a category-theoretic error minimisation problem, namely as an extension problem in a 2-category, such that the left Kan extensions of the extension problem are exactly the global error minimisers of the set-theoretic error minimisation problem.

\begin{theorem}[Machine Learning representation]
\label{theorem:universal_ml_representation}
Given a set theoretic error minimisation problem (Def \ref{definition:set_error}) there exists a 2-category $\mathbb{T}$ such that $M = \mathbb{T}(\mu, \tau)$, $D = \mathbb{T}(\delta, \tau)$, $Inf = \mathbb{T}(\iota, \tau)$ and an object $m \in M$ is a global error minimiser with respect to $d$ if and only if $m \cong Lan_\iota d$
\label{prop:mlrepresentation}

\end{theorem}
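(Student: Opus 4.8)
The plan is to realise the given set-theoretic problem as an extension problem in a suitable $2$-category $\mathbb{T}$, reducing it to the category-theoretic situation of \ref{definition:category_error} and \ref{prop:extension_error_minimisation} so that the established result that left Kan extensions are error minimisers becomes available, and then to supply the converse. Concretely, I would take $\mathbb{T}$ to have objects $\delta,\mu,\tau$, equip the sets $M$ and $D$ with categorical structure and declare $\mathbb{T}(\mu,\tau)$ to have object class $M$ and $\mathbb{T}(\delta,\tau)$ to have object class $D$, take $\mathbb{T}(\delta,\mu)$ to be the terminal category with its object renamed $\iota$, and make all remaining hom-categories empty or terminal. The composition functor $\circ_{\delta,\mu,\tau}$ is chosen so that $\widehat{Inf}:=\circ_{\delta,\mu,\tau}(-,\iota)\colon\mathbb{T}(\mu,\tau)\to\mathbb{T}(\delta,\tau)$ extends the set-map $Inf$ on objects; then $\mathbb{T}(\iota,\tau)=\widehat{Inf}$ and almost all $2$-category axioms are vacuous or forced by the unit laws. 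The design decision is the hom-categories themselves: a morphism of $\mathbb{T}(\delta,\tau)$ from $d$ to $d'$ should record an achievable ``error budget'' between $d$ and $d'$ (roughly a value bounded below by a monotone surrogate of $Err(d,d')$, budgets being combined by maxima with the zero budget as identity), and a morphism of $\mathbb{T}(\mu,\tau)$ from $m$ to $m'$ should record how much worse $m'$ is than $m$, with $\widehat{Inf}$ translating the latter into the corresponding budget. One then equips $\mathbb{T}(\delta,\tau)$ with an $S$-flavoured error $Err\colon\mathbb{T}(\delta,\tau)\to S$ for an $\mathbb{R}_\wedge$-style monoidal preorder $S$ with bottom element $Id_*$, so that \ref{prop:extension_error_minimisation} yields a genuine category-theoretic error minimisation problem.

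The second step is to check that this category-theoretic problem has the same global minimisers as the original, i.e. that the error comparison of \ref{def:error_comparison}, restricted to pairs $(d,Inf(m))$, reproduces the inequality $Err(d,Inf(m))\le Err(d,Inf(m'))$. Granting this, the theorem that left Kan extensions are error minimisers gives one direction: any left Kan extension $Lan_\iota d$ is a category-theoretic, hence set-theoretic, global minimiser with respect to $d$. For the converse I would argue directly from \ref{def:left_kan_extension:local}: given a set-theoretic minimiser $m^{*}$ with respect to $d$, the object $m^{*}$ together with the morphism $\eta\colon d\to Inf(m^{*})$ witnessing the minimal achievable budget should satisfy the universal property — for every $m$ and every $\gamma\colon d\to Inf(m)$, whose budget is $\ge Err(d,Inf(m))\ge Err(d,Inf(m^{*}))$ by minimality, one extracts the required $\alpha\colon m^{*}\to m$ from the budget difference. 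Since this universal property is isomorphism-invariant, one concludes ``$m$ is a global minimiser with respect to $d$ $\iff m$ is a left Kan extension of $d$ along $\iota$'', which is how I would read $m\cong Lan_\iota d$ in the statement (note that \ref{def:left_kan_extension:local} is the weak form, so $Lan_\iota d$ need not be literally unique). The degenerate case in which $d$ has no minimiser must be handled separately: one shows the extension data then admits no object with the weak universal property, so no $Lan_\iota d$ exists and both sides of the equivalence are empty.

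I expect the main obstacle to be the bookkeeping behind the hom-categories of $\mathbb{T}(\delta,\tau)$. Since $Err$ is only assumed to be a function $D\times D\to\mathbb{R}_{+}$ vanishing on the diagonal — with no triangle or max-subadditivity — the naive choice ``$\mathrm{Hom}(d,d')=\{r\ge Err(d,d')\}$ with composition by $\max$ or $+$'' is not closed under composition, so the category structure must be chosen so that, simultaneously, composition is associative and unital, $Err\colon\mathbb{T}(\delta,\tau)\to S$ is a genuine lax $2$-functor (the inequality $Err(g)Err(f)\le Err(gf)$ of \ref{definition:Lax2Functor}), the induced comparison still recovers the original ordering, and the left Kan extensions pick out exactly the minimisers. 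A robust way around the associativity obstruction is to take the morphisms of $\mathbb{T}(\delta,\tau)$ to be formal composites — a free/path construction on a graph whose edges carry error budgets — which makes associativity automatic; the price is that identifying the weakly initial objects of the resulting comma category with the set of minimisers, while keeping the isomorphism relation on $\mathbb{T}(\mu,\tau)$ coarse enough to collapse all minimisers for a fixed $d$ yet fine enough not to conflate minimisers for different $d$, becomes the delicate verification, and is exactly the ``slightly trickier'' point the surrounding text refers to.
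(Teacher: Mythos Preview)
Your three-object skeleton with $Inf$ realised as precomposition by $\iota$ is exactly the paper's setup. The divergence is in how the hom-categories are populated.

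The paper does not route through \ref{prop:extension_error_minimisation} or the theorem that Kan extensions are error minimisers; it rigs $\mathbb{T}$ so that the universal property becomes nearly vacuous, and no $S$-flavoured error on $\mathbb{T}(\delta,\tau)$ is ever constructed. Concretely, it first \emph{chooses} a function $Alg\colon U\to M$ selecting one minimiser for each $d\in U$ that has one (normalised so that $Inf(m)=d\Rightarrow Alg(d)\cong m$). Then $\mathbf{M}:=\mathbb{T}(\mu,\tau)$ is the thin category with an arrow $m\to m'$ iff $Inf(m)=Inf(m')$, and $\mathbf{D}:=\mathbb{T}(\delta,\tau)$ is the thin category whose only non-identity arrows are the single arrows $d\to Inf(Alg(d))$ for $d\in U$. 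With these choices every $m$ admitting a $2$-cell $d\Rightarrow m\iota$ already satisfies $Inf(m)=Inf(Alg(d))$, hence $m\cong Alg(d)$ in $\mathbf{M}$, and thinness forces the factorisation. The real-valued $Err$ enters only through the definition of $U$ and $Alg$; the bulk of the written argument is the bookkeeping that composition in $\mathbf{D}$ is well defined.

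Your budget programme is a genuinely different route, and the obstacle you flag at the end is real and is not removed by the path construction. The arrows of $\mathbb{T}(\mu,\tau)$ must be declared once, independently of $d$, yet your intended interpretation (``how much worse $m'$ is than $m$'') and the isomorphism relation you want (``collapse all minimisers for a fixed $d$ but not across different $d$'') are both $d$-dependent. The free-category trick repairs associativity in $\mathbf{D}$ but does nothing for this coupling between $\mathbf{M}$ and $\mathbf{D}$, nor for the factorisation $\gamma=(\alpha\cdot\iota)\eta$ and the uniqueness of $\alpha$, which rich hom-sets make nontrivial. The paper sidesteps all of this by making both hom-categories preorders and outsourcing the minimiser choice to an external $Alg$; your approach may be completable, but the step you call ``delicate'' is essentially the entire proof, and the detour through the earlier error-minimiser theorems does not shorten it.
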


\begin{proof}
Construct $\mathbb{T}$ to have three objects, $\mu$, $\delta$, and $\tau$. The hom objects will be selected such that $Inf$ becomes a composition morphism, and the 2-morphisms (morphisms of the hom category) are constructed to artificially select a minimising element if it exists.

Define the following singleton categories \[\mathbf{1} \cong \{\iota\} \cong \{Id_\mu\} \cong  \{Id_\delta\} \cong \{Id_\tau\}\]
Define $\mathbf{M}$ such that $Obj(\mathbf{M}) = M$ and that for any $m, m' \in \mathbf{M}$ there is a unique morphism $\sim \ : m \rightarrow m'$ if and only if $Inf(m) = Inf(m')$. Define $\mathbf{D}$ to be the category whose objects are the elements of $D$. Let $U \subseteq D$ be the subset of datasets for which an error minimising model exists, and let $Alg : U \rightarrow M$ be a function which selects an error minimising model for each $d \in D$ under the constraint that if there exists an $m\in \mathbf{M}$ such that $Inf(m) = d$, then $Alg(d) \cong m$. 

Define the hom sets of $\mathbf{D}$ with the following piecewise function.
\begin{gather*}
\mathbf{D}(d, d') :=
\begin{cases} 
      \{Id_d\} & d = d' \\
       \{*\} & d \in U \wedge  d' = Inf(Alg(d)) \wedge d \neq d'\\
       \emptyset & else\\
   \end{cases}
\end{gather*}

Composition is defined in the obvious way. For objects $d,d',d''\in D$ consider the form of the composition morphism. \[\circ_{d,d',d''} : \mathbf{D}(d,d') \times \mathbf{D}(d',d'') \rightarrow \mathbf{D}(d,d'')\]

Whenever $\mathbf{D}(d, d')$ or $\mathbf{D}(d', d'')$ is empty, then the product is empty, making the composition morphism the unique map from the empty set. When both $\mathbf{D}(d, d')$ or $\mathbf{D}(d', d'')$ are non empty, they must both be singleton. Therefore the following must be true.
\begin{align*}
&d = d' \vee (d' = Inf(Alg(d)) \wedge d \neq d')\\
&d' = d'' \vee (d'' = Inf(Alg(d')) \wedge d' \neq d'')\\
\end{align*}
Which may be simplified to form the following.
\begin{align*}
&d = d' \vee d' = Inf(Alg(d))\\
&d' = d'' \vee d'' = Inf(Alg(d'))\\
\end{align*}
Combining these statements produces the following deduction.
\begin{align*}
&\mathbf{D}(d,d') \times \mathbf{D}(d',d'') \cong \mathbf{1}\\
\implies&(d = d' \vee d' = Inf(Alg(d)))\\
&\wedge(d' = d'' \vee d'' = Inf(Alg(d')))\\
\implies & (d=d' \wedge d' = d'')\\
& \vee(d = d' \wedge d'' = Inf(Alg(d')))\\
& \vee(d' = Inf(Alg(d)) \wedge  d' = d'')\\
& \vee(d' = Inf(Alg(d)) \wedge d'' = Inf(Alg(d')))\\
\implies & (d = d'')\\
& \vee d'' = Inf(Alg(d))\\
& \vee(d' = Inf(Alg(d)) \wedge d'' = Inf(Alg(d')))\\
\end{align*}
When $d' = Inf(Alg(d))$ then for $m = Alg(d)$ \[Err(Inf(m), d') = Err(Inf(Alg(d)), d') = Err(d', d') = 0\] By the definition of $Alg$ this forces $Alg(d') \cong m = Alg(d')$, which by the construction of $\mathbf{M}$ means that $Inf(Alg(d')) = Inf(Alg(d)) = d'$. This allows the deduction to be simplified to the following implication.
\begin{align*}
&\mathbf{D}(d,d') \times \mathbf{D}(d',d'') \cong \mathbf{1}\\
\implies & (d = d'')\vee d'' = Inf(Alg(d))\\
&\vee(d' = Inf(Alg(d)) \wedge d'' = Inf(Alg(d')))\\
\implies & (d = d'')\vee d'' = Inf(Alg(d))\\
&\vee(d' = Inf(Alg(d)) \wedge d'' = d'))\\
\implies & (d = d'')\vee d'' = Inf(Alg(d))\\
\implies & \mathbf{D}(d,d'') \cong \mathbf{1}
\end{align*}

Making the composition morphism in this case the unique morphism between singleton sets.

Using the above defined categories, define the hom-categories of $\mathbb{T}$ as follows.

\begin{center}
\begin{tabular}{c|ccc}
$\mathbb{T}(-,-)$ & $\mu$ & $\delta$ & $\tau$\\
\hline
 $\mu$ & $\{Id_\mu\}$ & $\emptyset$ & $\mathbf{M}$\\
 $\delta$ & $\{\iota\}$ & $\{Id_\delta\}$ & $\mathbf{D}$\\
 $\tau$ & $\emptyset$& $\emptyset$ & $\{Id_\tau\}$ \\
\end{tabular}
\end{center}

The only composition morphism which is not fixed by identity laws or the empty categories is the following \[\circ_{\delta, \mu, \tau} : \mathbb{T}(\delta, \mu) \times \mathbb{T}(\mu, \tau) \rightarrow \mathbb{T}(\delta, \tau)\]

Substituting the known hom objects, this is rewritten as.

\[\circ_{\delta, \mu, \tau} : \{\iota \} \times \mathbf{M} \rightarrow \mathbf{D}\]

Because $\{\iota \} \times \mathbf{M} \cong \mathbf{M}$, the composition morphism can be defined by the inference function which maps all morphisms of $\mathbf{M}$ to the relevant identity morphisms \[\circ_{\delta, \mu, \tau} := Inf\] Finally, consider the following Kan extension problem in $\mathbb{T}$.

% https://q.uiver.app/#q=WzAsMyxbMCwxLCJcXGRlbHRhIl0sWzIsMSwiXFx0YXUiXSxbMSwwLCJcXG11Il0sWzAsMiwiXFxpb3RhIl0sWzAsMSwiZCIsMl0sWzIsMSwibSIsMCx7InN0eWxlIjp7ImJvZHkiOnsibmFtZSI6ImRhc2hlZCJ9fX1dXQ==
\[\begin{tikzcd}
	& \mu \\
	\delta && \tau
	\arrow["m", dashed, from=1-2, to=2-3]
	\arrow["\iota", from=2-1, to=1-2]
	\arrow["d"', from=2-1, to=2-3]
\end{tikzcd}\]

If an error minimising $m$ does not exist for the given $d$ then no Kan extension can exist as there is no morphism from $d$ into the image of $Inf : \{\iota \} \times \mathbf{M} \rightarrow \mathbf{D}$. However, if an error minimising $m$ does exist then by construction there is a morphism in $\mathbf{D}$ and consequently a 2-morphism in $\mathbb{T}$ of the form $d \Rightarrow Alg(d)\iota = Inf(Alg(d))$. For any $m$ for which there also exists a 2-morphism $d \Rightarrow m$ then as such a 2-morphism from $d$ into the image of $Inf$ is unique, then $m = Inf(Alg(d))$, which by the construction of $\mathbf{M}$ means that $Alg(d) \cong m$. This makes $Alg(d)$, when it exists, a left Kan extension in $\mathbb{T}$
\end{proof}
\section{Conclusion}

This paper has introduced an important bridge in the field of category theory for machine learning, providing a connection that allows the application of powerful category theoretic tools to old problems.

In addition to the theorems relating to the category-theoretic constructions, this methodology has also introduced insights that may impact how one views machine learning problems.

Firstly, the introduction of $S$ flavoured error supplements rigorous notions of how one might select an error function. Suggesting that error should be associated with the transformations between datasets gives an indication of how one may appropriately choose an error for a given problem. From this perspective, traditional notions such as distance, accuracy, and information loss reveal themselves as measurements of the minimal transformation necessary to convert one dataset into another.

Secondly, the independence of the left adjoint to choices of error may indicate that there are more fundamental ways of selecting a globally optimal model with respect to a dataset without referring to error. Re-framing model inference as a way of producing a dataset from a model, a mapping from a space of models to a space of datasets, allows one to think of algorithms as pseudo inverses to inference. In category theory, the natural choice of pseudo inverse is the adjunction, which is not constrained to an adjunction of functors. The definition of an adjunction can be generalised to any 2-category. Suppose it is more appropriate to represent the spaces of models and datasets as some other object, such as manifolds or measure spaces. In that case, finding an appropriate choice of 2-morphisms will indicate how to construct an algorithm as a left adjoint to inference.
% https://q.uiver.app/#q=WzAsMixbMCwwLCJNIl0sWzIsMCwiRCJdLFswLDEsIkluZiIsMCx7ImN1cnZlIjotMX1dLFsxLDAsIkFsZyIsMCx7ImN1cnZlIjotMX1dLFsyLDMsIlxcdG9wIiwxLHsic2hvcnRlbiI6eyJzb3VyY2UiOjIwLCJ0YXJnZXQiOjIwfSwic3R5bGUiOnsiYm9keSI6eyJuYW1lIjoibm9uZSJ9fX1dXQ==
\[\begin{tikzcd}
	M && D
	\arrow[""{name=0, anchor=center, inner sep=0}, "Inf", curve={height=-6pt}, from=1-1, to=1-3]
	\arrow[""{name=1, anchor=center, inner sep=0}, "Alg", curve={height=-6pt}, from=1-3, to=1-1]
	\arrow["\top"{description}, shorten >=2pt, Rightarrow, no body, from=0, to=1]
\end{tikzcd}\]

Finally, the demonstration that left Kan extensions may represent any error minimisation problem provides an interesting connection between the purpose of machine learning and the presentation. It is often intuitive to think that machine learning models find patterns within a dataset, allowing it to extend the already present data. However, this is only possible if one introduces additional assumptions about the data. Assumptions such as linearity, distance, smoothness, and maximum likelihood are all examples of assumptions machine learning models utilise to extend datasets. The nature of taking some aspect of data and extending it to a different context is precisely the structure encoded by a Kan extension, connecting intuitions about machine learning to a rigorous algebraic representation.

%\bibliography{references.bib}
\bibliographystyle{icml2025}

% APPENDIX
%\input{sections/appendix}

\end{document}